\newcommand \comment[1]{\textbf{[#1]}}		
\renewcommand \comment[1]{}		
\newcommand \mylabel[1]{\label{#1}}
\renewcommand \mylabel[1]{\label{#1}\comment{{\rm \{#1\} }}}
\newtheorem{lem}{Lemma}[section]
\newtheorem{thm}[lem]{Theorem}
\newcommand\cB{\mathcal{B}}
\newcommand\cBo{{\cB^\circ}}
\newcommand\cM{\mathscr{M}}
\newcommand\bbR{\mathbb{R}}
\newcommand\bbZ{\mathbb{Z}}
\newcommand\pP{\mathbb P}	
\newcommand\pQ{\mathbb Q}	
\newcommand\M{\mathbf{M}}
\newcommand\Kot{Kot\v{e}\v{s}ovec}
\begin{document}


\title{A $q$-Queens Problem.  VII.  \\
Combinatorial Types\\ of Nonattacking Chess Riders} 

\author{Christopher R.\ H.\ Hanusa}
\address{Department of Mathematics \\ Queens College (CUNY) \\ 65-30 Kissena Blvd. \\ Queens, NY 11367-1597, U.S.A.}
\email{\tt chanusa@qc.cuny.edu}

\author{Thomas Zaslavsky}
\address{Department of Mathematical Sciences\\ Binghamton University (SUNY)\\ Binghamton, NY 13902-6000, U.S.A.}
\email{\tt zaslav@math.binghamton.edu}

\begin{abstract}
On a convex polygonal chessboard, the number of combinatorial types of nonattacking configuration of three identical chess riders with $r$ moves, such as queens, bishops, or nightriders, equals $r(r^2+3r-1)/3$, as conjectured by Chaiken, Hanusa, and Zaslavsky (2019).  Similarly, for any number of identical 3-move riders the number of combinatorial types is independent of the actual moves.  
\end{abstract}

\subjclass[2010]{Primary 05A15; Secondary 00A08, 52C35.}

\keywords{Nonattacking chess pieces, fairy chess pieces, arrangement of lines}

\thanks{The first author gratefully acknowledges support from PSC-CUNY Research Award 61049-0049.}

\maketitle
\pagestyle{myheadings}
\markright{\textsc{A $q$-Queens Problem. VII. Combinatorial Types}}\markleft{\textsc{Hanusa and Zaslavsky}}

\section{Combinatorial Types}

Consider a chessboard, say an $n\times n$ square board, and a chess piece $\pP$ resembling the queen, bishop, and rook in that it has a fixed set of lines along which it can move and it can move any distance in either direction along those move lines.  Such pieces are known as \emph{riders} in fairy chess;\footnote{Chess with varied pieces, rules, or boards.} an example is the nightrider, which moves any distance in the directions of a knight's move.  

Now place several (labelled) copies of $\pP$ on the board in a nonattacking configuration, i.e., no piece lies on a move line of another piece.  
The piece $\pP_i$ in a particular location on the board divides the board into open regions by its move lines, each region determined by the two move lines of $\pP_i$ that bound it.  The other pieces must be inside some of these regions, as they cannot be on the move lines.  
Two nonattacking configurations with the same number of labelled pieces are said to have the same \emph{labelled combinatorial type} if for each pair $\pP_i$ and $\pP_j$, $\pP_j$ lies in the same region of the board with respect to $\pP_i$ in both configurations.  For example, there are six labelled (and three unlabelled) combinatorial types of nonattacking configuration for two copies of a piece with three move lines, shown in Figure~\ref{F:23}.

\medskip
\begin{figure}[htb]
	\includegraphics[scale=.45]{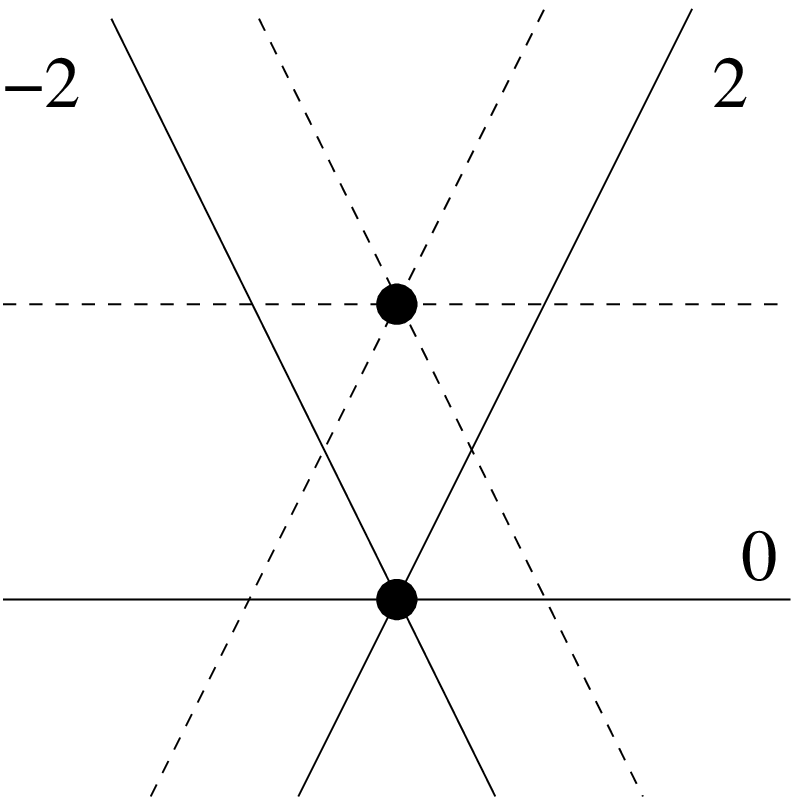} \qquad
	\includegraphics[scale=.45]{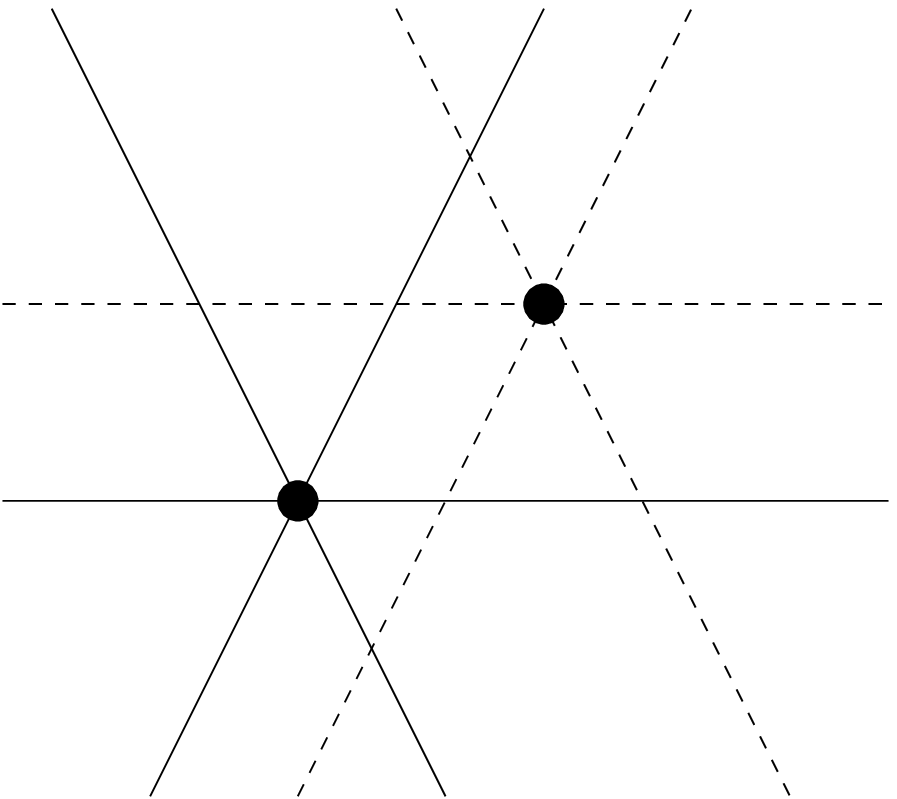} \qquad
	\includegraphics[scale=.45]{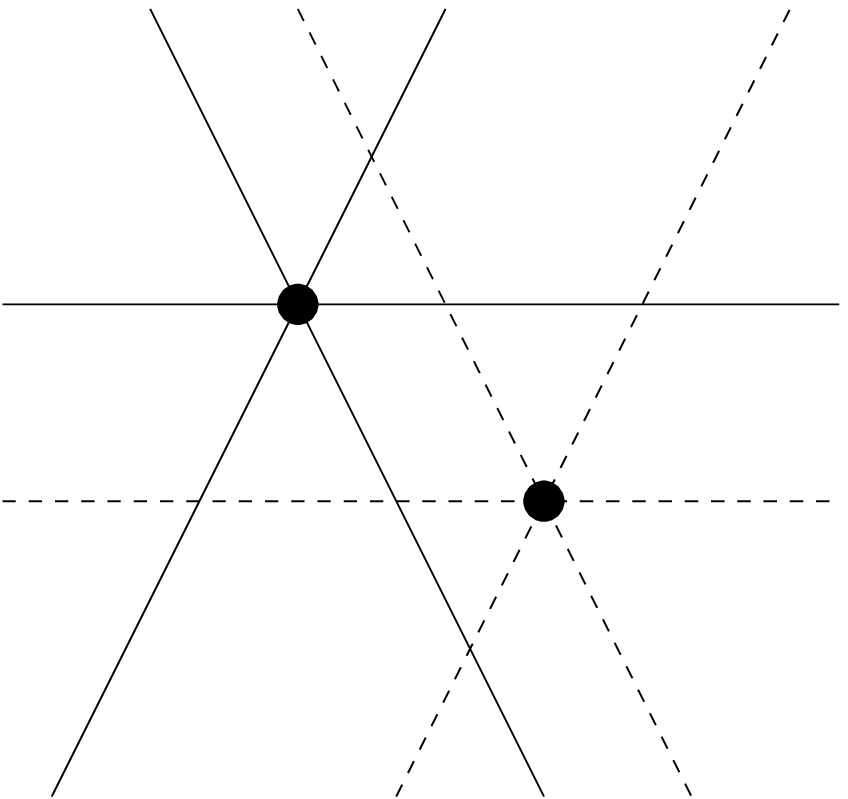} \qquad
	\caption{The three unlabelled combinatorial types of two (unlabelled) identical nonattacking riders with three moves along lines of slope $0$ and $\pm2$.  Since there are two ways to label the pieces in each type, there are six labelled combinatorial types.
	}
	\mylabel{F:23}
\end{figure}

How many combinatorial types of nonattacking configuration are there?  Call the number of unlabelled types $t_\pP(q)$.  \emph{A priori}, the answer could depend on the move lines of $\pP$, on the number $q$ of pieces, and on the board.  Happily, it turns out that the board itself does not matter because every possible combinatorial type can be realized on any sufficiently large board.  The set of move lines and the value of $q$ remain as relevant variables, which still are relatively complicated information.

Let us review the known data (Table \ref{Tb:qr}).  The number of combinatorial types is known for very few pieces and pieces with very few move lines.  Suppose there are $r$ move lines.  It is easy to see that $t_\pP(1) = 1$ and $t_\pP(2) = r$ \cite[Theorem 5.6]{QQs1}.  One might expect $t_\pP(q)$ to depend on the exact move lines for larger numbers of pieces, but Chaiken and we conjectured in \cite[Conjecture 4.4]{QQs3} that for any rider piece with $r$ move lines, $t_\pP(3) = r(r^2+3r-1)/3$.  Here we prove that conjecture.  We also show that when pieces have only three move lines the value of $t_\pP(q)$ does not depend on the exact lines, no matter how many pieces there are.  
\begin{table}
	\begin{tabular}{c|cccccc}
$q \backslash r$ & 1 & 2 & 3 & 4 & 5 & 6 \\ \hline
1     & 1 & 1 & 1 & 1 & 1 & 1 \\
2     & 1 & 2 & 3 & 4 & 5 & 6 \\
3     & 1 & 6 & 17 & 36 & 65 & 106 \\
4     & 1 & 24 & 151$^*$ & ${574_\pQ}^*$ & ? & ? \\
5     & 1 & 120 & 1899$^*$ & ${14206_\pQ}^*$ & ? & ? \\
6     & 1 & 720 & 31709$^*$ & ${501552_\pQ}^*$ & ? & ? \\
\end{tabular}
\medskip
\caption{The number of unlabelled combinatorial types of nonattacking configuration of $q$ identical riders with $r$ moves.  $^*$ indicates a number that was computed from \Kot's empirical formulas.  The subscript $_\pQ$ indicates a number that has been computed for queens but may depend on the piece.  The numbers marked by ?\ are unknown and may depend on the piece. }
\mylabel{Tb:qr}
\end{table}

On the other hand, $t_\pP(q)$ could depend on the set of move lines when $q,r\geq4$.  But data is hard to come by.  It is hard to compute values by direct counting when $q$ or $r$ is not tiny, i.e., $\leq2$.  For $q\geq3$ or $r\geq3$ we get $t_\pP(q)$ from a general theorem.  
A \emph{quasipolynomial} function $f(n)$ is a function given by a cyclically repeating sequence of polynomials.

\begin{lem}[{\cite[Theorems 4.1 and 5.3]{QQs1}}]\mylabel{L:-1}
The number $o_\pP(q;n)$, or $u_\pP(q;n)$, of nonattacking configurations of $q$ labelled, respectively unlabelled, copies of $\pP$ on an $n\times n$ square board is given by a quasipolynomial function of $n$ of degree $2q$.  

The number of combinatorial types is equal to $o_\pP(q;-1)$ or $u_\pP(q;-1)$, respectively.
\end{lem}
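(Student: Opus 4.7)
The plan is to encode labelled nonattacking configurations as lattice points in a $2q$-dimensional cube that avoid a central rational hyperplane arrangement, then invoke the inside-out polytope theory of Beck and Zaslavsky to extract both the quasipolynomiality and the evaluation at $n=-1$.

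First I would introduce coordinates $(x_1,y_1,\ldots,x_q,y_q)\in\bbR^{2q}$ for a configuration, with each $(x_i,y_i)\in\{1,\ldots,n\}^2$. For each move direction $(a,b)$ of $\pP$ and each pair $i<j$, the attacking condition $b(x_i-x_j)=a(y_i-y_j)$ cuts out a rational hyperplane through the origin; together these produce a central arrangement $\mathcal{A}_\pP$ of $r\binom{q}{2}$ hyperplanes, and $o_\pP(q;n)$ is exactly the number of lattice points in the open cube $(0,n+1)^{2q}$ that miss $\mathcal{A}_\pP$. Setting $P=[0,1]^{2q}$ and $t=n+1$, one recognises this as the open inside-out Ehrhart counting function $E^\circ_{(P,\mathcal{A}_\pP)}(t)$. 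Inclusion--exclusion over the intersection lattice gives
\[
o_\pP(q;n) \;=\; \sum_{F\in L(\mathcal{A}_\pP)} \mu(\hat 0,F)\, L_{F\cap P}(n+1),
\]
where each $L_{F\cap P}$ is the Ehrhart quasipolynomial of the rational polytope $F\cap P$, of degree $\dim F\leq 2q$; the flat $F=\bbR^{2q}$ contributes $n^{2q}$, so the sum is a quasipolynomial in $n$ of degree $2q$.

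For the reciprocity, I would apply the Beck--Zaslavsky inside-out Ehrhart--Macdonald relation $E^\circ(-t)=(-1)^{2q}E(t)$ at $t=0$, yielding $o_\pP(q;-1)=E^\circ(0)=E(0)$. The closed count $E(0)=\sum_R \#(\{0\}\cap\bar R)$ enumerates the regions $R$ of $\mathcal{A}_\pP$ whose closure contains the origin; because $\mathcal{A}_\pP$ is central, every region qualifies, so $E(0)$ is precisely the total number of regions. A labelled combinatorial type is by definition a region of $\mathcal{A}_\pP$, and the inequalities cutting out a region are invariant both under positive scaling (the arrangement is central) and under the diagonal translations $(x_i,y_i)\mapsto(x_i+c,y_i+c')$ (since the hyperplanes depend only on differences), so any nonempty region contains lattice points in $\{1,\ldots,n\}^{2q}$ once $n$ is large enough. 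Thus $o_\pP(q;-1)=\#\{\text{labelled combinatorial types}\}$. Dividing by $q!$ (which acts freely on nonattacking configurations since their occupied squares are distinct) gives the unlabelled statement.

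The main obstacle is locating the right framework: once nonattacking configurations are identified with interior lattice points of a dilated cube minus a rational hyperplane arrangement, both the quasipolynomiality and the value at $n=-1$ follow fairly mechanically from Beck--Zaslavsky. The delicate ingredient is the centrality of $\mathcal{A}_\pP$, which is what allows $E(0)$ to pick up every region exactly once --- for a non-central arrangement the reciprocity at $t=0$ would recover only those regions whose closures touched the origin, which is not what we want.
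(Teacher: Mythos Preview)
The paper does not prove this lemma; it is quoted from \cite[Theorems~4.1 and~5.3]{QQs1}.  Your sketch is essentially the argument of that reference: encode labelled configurations as interior lattice points of $(n{+}1)[0,1]^{2q}$ off the central rational arrangement $\mathcal{A}_\pP$, identify $o_\pP(q;n)$ with the open Ehrhart quasipolynomial of the inside-out polytope $([0,1]^{2q},\mathcal{A}_\pP)$, and extract both quasipolynomiality and the region count at $n=-1$ from Beck--Zaslavsky reciprocity.

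One small correction of framing: in this paper a labelled combinatorial type is \emph{defined} by the sign data T1/T2 of Section~\ref{sec:back}, not as a region of $\mathcal{A}_\pP$; the coincidence of the two is the content of \cite[Lemma~5.2]{QQs1} (also invoked in the proof of Lemma~\ref{L:top}), so you should state it as a lemma rather than ``by definition''.  Your diagonal-translation observation---that the lineality space of $\mathcal{A}_\pP$ contains the all-ones direction---is exactly what guarantees that every cone of $\mathcal{A}_\pP$ meets the open cube and that each regional closure reaches the origin, so that $E(0)$ really does count all regions rather than only some of them.
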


(The functions are related by $u_\pP(q;n) = o_\pP(q;n)/q!$.  The former is probably more interesting but solutions are obtained by means of labelled counts.)

Applying Lemma \ref{L:-1} requires knowing the quasipolynomial formula for the counting function.  That is hard, and few such formulas are known.  We rely on \Kot's heuristic results, especially \cite{ChMath}, for most of them.  
(We have found that, when they can be checked rigorously, as in \cite{QQs3} and its related papers, Kot\v{e}\v{s}ovec's formulas are always correct.)  
Table \ref{Tb:qr} shows the results.

\section{Background}\label{sec:back}

We need precise definitions.  The \emph{board} $\cB$ is a closed, convex polygonal region in the plane. 
(In \cite{QQs1, QQs3} the vertices are assumed rational; here that is unnecessary.)  
The working board of order $n$, on which we place pieces, is $(n+1)\cBo \cap \bbZ^2$, $\cBo$ denoting the interior of $\cB$.  
For example, take the square board $\cB = [0,1]^2$; then $(n+1)\cBo \cap \bbZ^2 = [n]^2$ (where $[n]$ means $\{1,2,\ldots,n\}$), which is indeed the ordinary $n\times n$ square chessboard.  

The \emph{move set} of a piece $\pP$ is the set $\M = \{m_1,\dots,m_r\}$ of integer vectors $m_j = (c_j,d_j)$, the \emph{basic moves}, such that $\pP$ can move by any amount $\lambda m_j$ that takes it to an integral point in $\cB$.  
We refer to a piece with $r$ basic moves as an \emph{$r$-move rider}.
Most important is the \emph{slope}, $\mu_j = d_j/c_j$, a number (or infinity) that is rational for chess pieces, though for counting combinatorial types it need not be rational.

Given a piece $\pP$ with move set $\M = \{m_1,\dots,m_r\}$, place it at $(x_0,y_0)\in\bbZ^2$.  The points $(x,y)$ that $\pP$ attacks are those that satisfy the equation $(x-x_0,y-y_0) = \lambda m_j$ for some $m_j = (c_j,d_j) \in \M$ and real number $\lambda$.  This equation defines a line $l_j$ through $(x_0,y_0)$ of slope $\mu_j=d_j/c_j$, which we call a \emph{move line} of $\pP$.  The $r$ move lines form an arrangement of lines that we call the \emph{move-line arrangement} of $\pP$, written $\cM(\pP)$.  
The move-line arrangement forms $2r$ regions, each bounded by two move lines with consecutive slopes.  

\subsubsection*{Combinatorial type.}
Suppose our $q$ pieces are $\pP_1,\ldots,\pP_q$.  We write $\cM_i = \cM(\pP_i)$ for the arrangement and $l_j^i$ for the move line on piece $\pP_i$ with slope $\mu_j$.  
The \emph{labelled combinatorial type} of a nonattacking configuration of $q$ pieces is the list of the sides of move lines $l_j^i$ occupied by each piece $\pP_k$, for all $(i,j,k)$ such that $i \neq k$.  
We formalize this in two equivalent ways.  
\begin{enumerate}[T1.]
\item First, the $2r$ rays of the $r$ move lines through a piece $\pP_i$ appear in a cyclic order around $\pP_i$.  Each consecutive pair of rays in this order contain between them one region of the move-line arrangement, and each $\pP_k$ (for $k \neq i$) is in one of these regions.  We can number the regions $1,2,\ldots, 2r$ in cyclic order, and use the same numbering for the regions around every other piece (since the move-line arrangement at every piece is a translation of that at $\pP_i$).  With the regions around each piece unambiguously numbered, we can describe a labelled combinatorial type by giving for each ordered pair $(\pP_i,\pP_k)$ the number of the region of $\cM(\pP_i)$ that contains $\pP_k$.  
\item Second, each move line has a preferred orientation given by the direction of its basic move, and therefore it has distinguishable right and left sides.  For each triple $(\pP_i, l_j^i, \pP_k)$ of a piece $\pP_i$, a move line $l_j^i$ on $\pP_i$, and another piece $\pP_k$, the second piece is on the right or left side of $l_j^i$.  The labelled combinatorial type consists of this information.  

If we change the move set $\M$ by reversing a basic move $m_i$, i.e., replacing it by $-m_i$, we do not change the move lines or the relative piece positions but we have reoriented the line $l_i$, interchanging the names of its left and right sides, so we do change the combinatorial type in sense T2.  Thus, the \emph{set} of combinatorial types associated with a move set changes (in a simple way that we call \emph{reorientation}) upon reversing basic moves, but the number of types remains the same, so we can speak of the \emph{number} of combinatorial types associated with $q$ and a set of move lines, independent of the choice of basic move for each line.  
We also say that a set of move lines has a definite set of combinatorial types \emph{up to reorientation}.
\end{enumerate}

The \emph{unlabelled combinatorial type} of a configuration is the labelled type with labels ignored; formally, it is the class of all labelled types obtained from one such type by permuting the labels.  
Another way to make the distinction between labelled and unlabelled types is to characterize the pieces; i.e., combinatorial types of (un)labelled pieces are, respectively, (un)labelled combinatorial types.

\subsubsection*{Isotopy.}
In \cite{QQs1} Chaiken and we defined two kinds of isotopy of nonattacking configurations, each of which produces an equivalence relation on them.  The simpler one is continuous isotopy.  Allow pieces to occupy any real point in the board.  Continuous isotopy means moving the pieces around in any way that keeps the configuration nonattacking throughout.  For discrete isotopy the pieces stay on integral points but we are allowed to refine the grid at will, by increasing $n$ by a large multiplier.  The following lemma is partly explicit and partly implicit in \cite[Section 5]{QQs1}.

\begin{lem}\mylabel{L:top}
Isotopy, discrete isotopy, and having the same labelled combinatorial type produce the same equivalence relation on nonattacking configurations of labelled pieces.
\end{lem}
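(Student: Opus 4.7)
The plan is to recognize the space of nonattacking labelled configurations as the complement, inside an open convex region, of a finite arrangement of hyperplanes, and then to deduce the three equivalences from standard facts about chambers. View a configuration as a point $(P_1,\dots,P_q)\in\bbR^{2q}$, and for each triple $(i,k,j)$ with $i\neq k$ and $1\le j\le r$ let $H_{i,k,j}$ be the \emph{attack hyperplane} defined by $d_j(x_k-x_i)-c_j(y_k-y_i)=0$, which expresses that $\pP_k$ lies on the move line $l_j^i$ of $\pP_i$. The real nonattacking configurations in the board's interior then form the open set $U:=(\cBo)^q\setminus\bigcup_{(i,k,j)}H_{i,k,j}$.

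By formalization T2, the labelled combinatorial type of a configuration is exactly the sign vector of the linear forms $d_j(x_k-x_i)-c_j(y_k-y_i)$. Hence two configurations have the same labelled combinatorial type iff they lie in the same chamber of this arrangement inside $(\cBo)^q$. Each chamber, as an intersection of open half-spaces with the open convex set $(\cBo)^q$, is itself open, convex, and path-connected, and distinct chambers are separated by at least one attack hyperplane, so no continuous nonattacking path crosses between them. Since continuous isotopy is by definition path-connectedness in $U$, this yields the equivalence of continuous isotopy with sameness of labelled combinatorial type.

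It remains to identify discrete isotopy with continuous isotopy. A discrete isotopy --- a sequence of unit-step moves on a $kn\times kn$ refinement of the grid, with every intermediate configuration nonattacking --- rescales to a piecewise-linear continuous isotopy on the original board. For the converse, suppose integer configurations $c_0,c_1$ on the $n\times n$ grid are joined by a continuous path $\gamma:[0,1]\to U$. By compactness of $\gamma([0,1])$ and openness of $U$, there is $\delta>0$ such that $\gamma$ stays at distance at least $\delta$ from every attack hyperplane and from the boundary of $(\cBo)^q$. Approximate $\gamma$ by a piecewise-linear path through vertices in the lattice $(1/k)\bbZ^{2q}$ within $\delta/3$ of $\gamma$, and replace each segment by a monotone staircase of unit lattice steps; for $k$ sufficiently large, this staircase stays within $\delta$ of $\gamma$, hence inside $U$. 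Scaling by $k$ (i.e., passing to the $kn\times kn$ board) turns this into a discrete isotopy from $c_0$ to $c_1$. The main obstacle is precisely this last estimate --- choosing $k$ so that the staircase is fine enough to stay in $U$ --- a routine lattice-approximation of a compact path in an open set, essentially the content of \cite[Section 5]{QQs1} referenced in the statement.
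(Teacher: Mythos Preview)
Your argument is correct and follows essentially the same route as the paper's own proof: identify labelled combinatorial types with chambers of the attack-hyperplane arrangement in $\bbR^{2q}$ restricted to $(\cBo)^q$, use convexity of chambers to get the equivalence with continuous isotopy, and then pass between continuous and discrete isotopy by lattice approximation. The paper simply cites \cite[Lemma~5.2 and Theorem~5.4]{QQs1} for these two steps, whereas you have written them out explicitly; the underlying ideas are the same.
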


Consequently, to count combinatorial types it is not necessary to place pieces on integral points of a dilated board; they can be placed anywhere in the polygon $\cB$.

\begin{proof}
The equivalence of isotopy and discrete isotopy is \cite[Theorem 5.4]{QQs1}.  It was stated in \cite{QQs1} for boards with rational vertices but that restriction is unnecessary.  

Obviously, isotopy preserves combinatorial type (labelled, for the proof).  The converse is also true: two nonattacking configurations with the same combinatorial type are isotopic.  This follows from the fact that a combinatorial type corresponds to a region of a hyperplane arrangement in $\bbR^{2q}$, as shown in \cite[Lemma 5.2]{QQs1}.  Thus, isotopy is equivalent to having the same combinatorial type.
\end{proof}

\subsubsection*{Irrationality.}
What we are really doing, in part, is counting the regions of an arrangement of lines of the following form.  There are a set of $r$ slopes and a set of $q$ points in the plane.  Through each of the points there is a line with each of the $r$ slopes.  
In the previous work with Chaiken we assumed rational slopes (counting $\infty$ as an honorary rational slope).  For combinatorial types that is unnecessary, just as it is not necessary to place pieces on board square.  Our arguments for $q=3$ and $r=3$ do not depend on rationality of the slopes.

\subsubsection*{Projective Transformation.}
Suppose $\pP$ is a rider with move set $\M$.  We may convert $\pP$ into another rider $\pP'$ by applying a projective transformation to $\bbR^2$ and $\M$.  Under the transformation, a nonattacking configuration of $q$ copies of $\pP$ (either labelled or unlabelled) becomes a nonattacking configuration of $q$ copies of $\pP'$, which has the same combinatorial type because the regions of $\cM(\pP_i)$ map to the regions of $\cM(\pP_i')$.  Thus the combinatorial types for the two pieces are in natural bijection, although one may have rational slopes and the other not.  We apply this principle later to prove Theorem \ref{T3m}.

There is a subtle aspect to projective transformation.  It can transform any ordered set of three move lines to any other, but it may not transform the first move set to the second move set because it does not necessarily preserve the orientations of the basic moves.  For example, $m_1, m_2, m_3$ may transform to $m_1', m_2', -m_3'$ (respectively) but not to $m_1',m_2',m_3'$.  Thus, projective transformation preserves combinatorial types up to reorientation but not necessarily absolutely.

\section{Three riders and any number of basic moves}

The formula for the number of combinatorial types of nonattacking configuration of three riders is based on counting regions of overlapping move-line arrangements.  We use a classic result of Steiner.

\begin{lem}[Steiner \cite{1826}]\mylabel{L:steiner}
Suppose we have $k$ lines in the plane $\bbR^2$, whose intersection points consist, for each $p\geq2$, of $n_p$ points at which $p$ lines intersect.  Then the number of regions formed by the lines is $1 + k + \sum_{p\geq2} (p-1)n_p$.
\end{lem}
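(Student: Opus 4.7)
The plan is to prove Steiner's formula by induction on the number $k$ of lines, adjoining a new line at each step and tracking two quantities: the number of new regions created and the change in $\sum_{p\geq2}(p-1)n_p$. The base case $k=0$ gives one region, matching $1+0+0=1$, so the real work is in the inductive step.

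For the inductive step, I adjoin a line $\ell$ to an arrangement $\mathcal A$ of $k-1$ lines. Let $P_1,\dots,P_s$ be the distinct points where $\ell$ meets $\mathcal A$, and let $q_j\geq1$ be the number of lines of $\mathcal A$ through $P_j$. The $s$ points cut $\ell$ into $s+1$ consecutive pieces, each of which lies inside a single region of $\mathcal A$ and splits it into two, so exactly $s+1$ new regions are created.

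The key bookkeeping is that $\sum_{p\geq2}(p-1)n_p$ grows by exactly $s$, independently of the multiplicities $q_j$. If $q_j=1$ then $P_j$ was not an intersection of $\mathcal A$ and becomes a simple crossing, contributing $(2-1)\cdot 1=1$ to the sum; if $q_j\geq 2$ then a $q_j$-fold point is upgraded to a $(q_j+1)$-fold point, with net change $q_j-(q_j-1)=1$. Either way each $P_j$ adds $1$, so the total change is $s$. Combined with the $+1$ coming from $k\mapsto k+1$, the formula $1+k+\sum(p-1)n_p$ increases by $s+1$, matching the number of new regions.

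There is no serious obstacle: the argument is a clean induction, and the only subtlety is verifying that each $P_j$ contributes uniformly $+1$ to the sum whether it was previously a higher multiple point, a simple crossing, or not an intersection at all. An alternative route via Euler's formula on the one-point compactification of $\bbR^2$ would also work, but the line-by-line induction is more direct and avoids having to treat unbounded cells separately.
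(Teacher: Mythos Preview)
Your inductive argument is correct: the base case is trivial, and in the inductive step the new line $\ell$ is cut into $s+1$ pieces by its $s$ intersection points with the existing arrangement, creating exactly $s+1$ new regions, while each such point contributes exactly $1$ to $\sum_{p\geq2}(p-1)n_p$ regardless of its previous multiplicity. The bookkeeping matches.

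Note, however, that the paper does not give its own proof of this lemma at all: it is stated with attribution to Steiner's 1826 paper and used as a black box. So there is nothing to compare against; you have supplied a complete proof where the paper simply cites a classical result.
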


Suppose we have such an arrangement of lines and a board $\cB$.  By taking a sufficiently large dilation of $\cB$ we can ensure that all intersection points are inside the dilation and all regions in the plane do intersect the dilation.  Thus, for counting regions in the dilated board we can ignore the board and pretend it is the whole plane.  That is why the number of combinatorial types of nonattacking configuration is independent of the board.

\begin{thm}\mylabel{T:q3}
On any board, let\/ $\pP$ be a rider with $r$ basic moves.   The number of combinatorial types of\/ $3$ nonattacking unlabelled copies of $\pP$ on dilations of the board equals $\frac13 r(r^2+3r-1)$.  The set of combinatorial types depends only on $r$ and, up to reorientation, is independent of the move set.
\end{thm}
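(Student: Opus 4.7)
The plan is to compute the number of \emph{labelled} combinatorial types by counting regions in a single planar line arrangement and then to divide by $3!=6$. By Lemma~\ref{L:top} I may fix $\pP_1$ and $\pP_2$ in some generic nonattacking position and work up to isotopy. The labelled combinatorial type of an ordered triple $(\pP_1,\pP_2,\pP_3)$ is then determined by (a) which of the $2r$ regions of $\cM(\pP_1)$ contains $\pP_2$, and (b) which region of the combined $2r$-line arrangement $\cM(\pP_1)\cup\cM(\pP_2)$ contains $\pP_3$; datum (b) specifies the regions of $\cM(\pP_1)$ and $\cM(\pP_2)$ containing $\pP_3$, and reversing directional vectors recovers which regions of $\cM(\pP_3)$ contain $\pP_1$ and $\pP_2$.

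I count the regions of $\cM(\pP_1)\cup\cM(\pP_2)$ with Lemma~\ref{L:steiner}. The $2r$ lines split into $r$ parallel pairs (one per slope), meet in two $r$-fold concurrencies at $\pP_1$ and $\pP_2$, and otherwise produce $r(r-1)$ ordinary crossings, one for each ordered pair of distinct slopes $(\mu_j,\mu_k)$, namely the intersection of the $\mu_j$-line through $\pP_1$ with the $\mu_k$-line through $\pP_2$. Steiner's formula yields
\[
1+2r+2(r-1)+r(r-1)=r^{2}+3r-1,
\]
so the labelled count equals $2r(r^{2}+3r-1)$.

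To pass to the unlabelled count I will show that the natural action of $S_{3}$ by relabeling is free on labelled types. In the cyclic numbering of (T1), opposite directions correspond to region numbers differing by $r$ modulo $2r$ and are hence never equal. A transposition $(i\,j)$ fixing a labelled type would demand that the region of $\cM(\pP_i)$ containing $\pP_j$ equal the region of $\cM(\pP_j)$ containing $\pP_i$; these correspond to opposite directional vectors and so are distinct regions. A $3$-cycle would force the three displacement vectors $\pP_2-\pP_1$, $\pP_3-\pP_2$, $\pP_1-\pP_3$ to lie in a single sector of $\cM$; but they sum to zero while any such sector, being open of angular width at most $\pi$, contains only vectors with a strictly positive projection onto a common direction. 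Every $S_{3}$-orbit therefore has size $6$, giving the claimed unlabelled count $r(r^{2}+3r-1)/3$.

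For independence from the move set, both ingredients (the count $2r$ of labelled two-piece types and the Steiner enumeration) depend only on $r$, since the intersection multiplicities of $\cM(\pP_1)\cup\cM(\pP_2)$ are dictated solely by the cardinality of the slope set. For the stronger assertion on the \emph{set} of types, match the slopes of any two $r$-move sets by cyclic order and transfer the region numberings of (T1); equivalently, any continuous deformation of the slopes that avoids creating extra concurrencies preserves the incidence pattern and hence the combinatorial classification up to reorientation. I anticipate the main obstacle to be a rigorous justification of the freeness of the $S_{3}$-action, specifically a careful bookkeeping of the cyclic numbering convention and the opposite-region relation, since a single non-free orbit would invalidate the division by $6$.
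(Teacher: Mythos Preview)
Your approach matches the paper's: place $\pP_1$, count the $2r$ regions for $\pP_2$, apply Steiner to $\cM_1\cup\cM_2$ to get $r^2+3r-1$ regions for $\pP_3$, multiply, and divide by $3!$. Your explicit verification that the $S_3$-action on labelled types is free is a genuine improvement over the paper, which simply divides by $3!$ without comment.

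There is, however, one step you pass over that the paper treats as the crux. You write ``By Lemma~\ref{L:top} I may fix $\pP_1$ and $\pP_2$ in some generic nonattacking position.'' Lemma~\ref{L:top} equates isotopy of the \emph{full} configuration with sameness of type; it does not by itself tell you that every labelled type with $\pP_2$ in a given region $C$ of $\cM_1$ is realized when $\pP_2$ sits at your chosen generic point. Concretely, the Steiner count shows that for \emph{each} position of $\pP_2$ in $C$ there are $r^2+3r-1$ realizable pairs (cone of $\cM_1$, cone of $\cM_2$) for $\pP_3$; it does not show that the \emph{set} of realizable pairs is the same for every such position. If two positions of $\pP_2$ in $C$ yielded different (equinumerous) sets, the total number of labelled types would exceed $2r(r^2+3r-1)$ and your division would be wrong. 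The paper closes this gap by normalizing $C$ to a quadrant via an affine map and then observing directly that each region of $\cM_{12}$ is the intersection $C_j^1\cap C_k^2$ of a cone at $\pP_1$ with a cone at $\pP_2$, and that which such intersections are nonempty is visibly unchanged as $\pP_2$ moves within $C$. An equivalent patch: as $\pP_2$ moves inside $C$, no three lines of $\cM_{12}$ can become concurrent (any such concurrency would force $\pP_2$ onto a line of $\cM_1$), so the oriented-matroid type of $\cM_{12}$, and hence the set of sign vectors of its regions, is constant. Either argument is short, but one of them is needed.
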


\begin{proof}
We suppose the three pieces are labelled $\pP_1,\pP_2,\pP_3$.  
To make a nonattacking configuration we place $\pP_1$ anywhere in the (open) board, then $\pP_2$ inside a region of the move-line arrangement centered at $\pP_1$, and finally $\pP_3$ inside any region of the combined move-line arrangements of $\pP_1$ and $\pP_2$; that is, of $\cM_{12} = \cM_1 \cup \cM_2$.

According to Lemma \ref{L:top}, the number of labelled combinatorial types is the number of ways to choose the region for $\pP_2$ and then for $\pP_3$.  

When we place $\pP_2$, we put it inside a region of $\cM_1$, which we choose out of $2r$ possible regions.  Due to Lemma \ref{L:top}, any point in that region is equivalent to any other.  

When we place $\pP_3$, we put it in a region of $\cM_{12}$, so we need to calculate the number of regions of this arrangement.  For that, we need to find the intersection points.
First, $\pP_i$ is located on an $r$-fold intersection, as it is on all lines of $\cM_i$ but no other lines (since $\pP_1$ and $\pP_2$ are nonattacking).  Thus, $n_r=2$.
There are no multiple intersections away from the pieces because two lines of the same $\cM_i$ already meet at $\pP_i$.
Line $l_j^1$ intersects every line of $\cM_2$ at a separate point, except for $l_j^2$, to which it is parallel.  Thus, $n_2 = r(r-1)$.

It follows from Lemma \ref{L:steiner} that $\cM_{12}$ has $1 + 2r + [r(r-1) + 2(r-1)] = r^2 + 3r - 1$ regions.

Since $\pP_2$ could have been in any of the $2r$ regions of $\cM_1$, we multiply by $2r$.

Finally, we divide by $3!$ because the three pieces are actually unlabelled.

To complete the proof we show that the combinatorial types of regions of $\cM_{12}$ are independent of where we place $\pP_2$ within a fixed region of $\cM_1$.  We do this by describing all the regions of $\cM_{12}$ and observing that the descriptions depend only on the region of $\cM_1$ occupied by $\pP_2$.  

By rotating the configuration and slopes (as explained under ``Projective transformation'' in Section \ref{sec:back})  we can ensure that the region $C$ of $\cM_1$ in which we will place $\pP_2$ is bounded by the positive horizontal ray out of $\pP_1$.  
By another affine transformation we can arrange the slopes to be $0=\mu_1<\cdots<\mu_{r-1}<\mu_r=\infty$ and $C$ to be the fourth quadrant.  Thus the argument for any choice of $C$ is the same and we can assume a simple form for the move-line arrangements.

The irrelevance of where $\pP_2$ lies in $C$ is obvious from Figure \ref{M12}.
The regions of $\cM_i$ are cones with apex at $\pP_i$ and a region of $\cM_{12}$ is the intersection of two of these cones.  Each cone is either $C_j^i$, bounded by rays with directions $m_{j-1}$ and $m_j$ for $j=2,\dots,r$ or $-m_r$ and $m_1$ for $C_1^i$, or one of the negatives $C_{-j}^i = -C_j^i$.  
The four cones with $j=1$ are special since they are quadrants and the pieces are in two of them: $\pP_2$ in $C_1^1$ with apex $\pP_1$ and $\pP_1$ in $-C_1^2$ with apex $\pP_2$.  
That explains why the exact location of $\pP_2$ is unimportant.
\begin{figure}[htbp]
\begin{center}
\includegraphics[scale=.57]{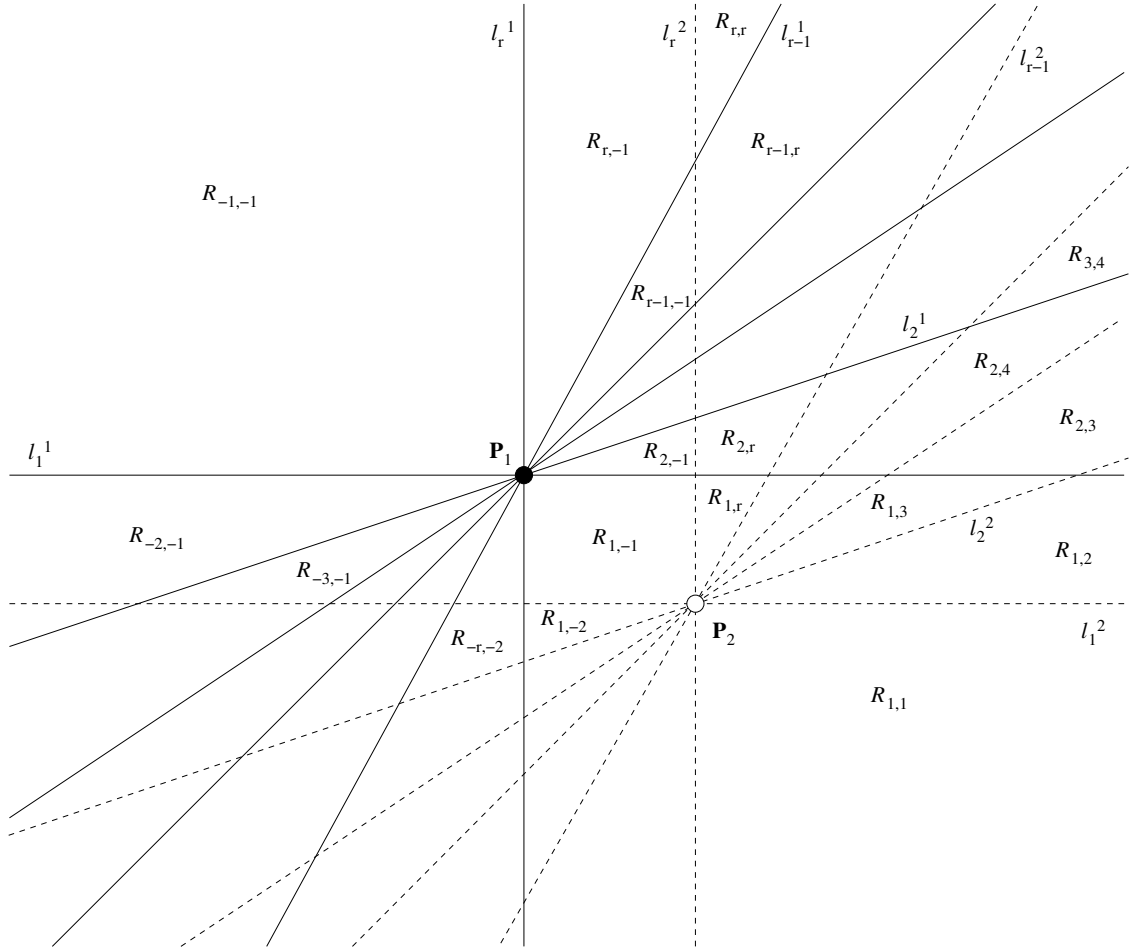}
\caption{The regions of $\cM_{12}$.  It is clear that moving $\pP_2$ left, right, up, or down within $C_1^1$ will not change either the regions formed by intersecting pairs of cones, $R_{jk} = C_j^1 \cap C_k^2$, or the boundary lines of each region.}
\mylabel{M12}
\end{center}
\end{figure}
\end{proof}

\section{Any number of riders having three basic moves}

We have another theorem involving the number three.  We can show that every rider with three basic moves has the same number of combinatorial types of nonattacking configuration.  We do not offer a formula.  We expect it to be complicated---unlike the proof.

\begin{thm}\mylabel{T3m}
On any board, let $\pP$ be any rider with $3$ basic moves.   
The number of combinatorial types of $q$ nonattacking unlabelled copies of $\pP$ on dilations of the board is independent of the move set.
The set of combinatorial types is independent of the move set up to reorientation.
\end{thm}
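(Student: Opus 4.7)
The plan is to deploy the principle from the ``Projective Transformation'' discussion in Section~\ref{sec:back}: given any two $3$-move riders $\pP$ and $\pP'$, I will exhibit an invertible linear map $T\colon \bbR^2 \to \bbR^2$ carrying the move-line directions of $\pP$ to those of $\pP'$. Applying $T$ pointwise to a configuration $(p_1,\dots,p_q)$ then gives a bijection between nonattacking configurations of $q$ copies of $\pP$ and $q$ copies of $\pP'$ that preserves labelled combinatorial type in sense T1, and hence, after passing to $S_q$-orbits, yields the bijection of unlabelled types claimed by the theorem (up to reorientation in sense T2). By Lemma~\ref{L:top} there is no loss in placing pieces at arbitrary real points, so the fact that $T$ need not preserve integrality is harmless.

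The heart of the argument is the construction of $T$, and this is where the hypothesis $r = 3$ is essential. An invertible $2\times 2$ matrix has four real parameters, and scaling by a nonzero constant does not change the induced map on lines through the origin, leaving exactly three effective parameters. These suffice to match three prescribed directions. Concretely, write the move-direction vectors of $\pP$ and $\pP'$ as $v_1,v_2,v_3$ and $v_1',v_2',v_3'$. Since the slopes are pairwise distinct, $v_1,v_2$ are linearly independent, so choosing any nonzero $\lambda_1,\lambda_2$ and setting $A v_1 = \lambda_1 v_1'$, $A v_2 = \lambda_2 v_2'$ uniquely determines an invertible $A$. Expanding $v_3$ as a combination of $v_1,v_2$ and similarly $v_3'$ in terms of $v_1',v_2'$ (all coefficients nonzero because no two slopes coincide), the demand that $Av_3$ be parallel to $v_3'$ becomes a single scalar equation fixing the ratio $\lambda_1/\lambda_2$; this leaves a one-parameter family of valid $A$, from which I pick one. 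For $r \ge 4$ this counting fails---$r$ direction constraints on only three effective parameters are overdetermined---so the argument is intrinsically a $3$-move phenomenon.

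With $T$ in hand, transfer of combinatorial types is straightforward. Because $T$ sends lines of slope $\mu_j$ to lines of slope $\mu_j'$ for each $j$, it maps $\cM(\pP_i)$ to $\cM(T(\pP_i))$ and the combined arrangement $\bigcup_i \cM(\pP_i)$ bijectively onto that of the image configuration, with regions going to regions. Thus the nonattacking property (no piece on another's move line) and the assignment of a region of $\cM(\pP_i)$ to each $\pP_k$ are both preserved, which gives the statement for labelled types in sense T1; sense T2 can differ only by possible reversal of basic-move orientations under $T$, which is exactly the ``reorientation'' hedge in the theorem. Dividing by the $S_q$ action of permuting labels finishes the unlabelled case. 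In short, the only real obstacle is the existence of $T$, and the key input is the exact matching between the three slopes of a $3$-move rider and the three-parameter family of linear maps of $\bbR^2$ modulo scaling.
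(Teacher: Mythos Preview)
Your proof is correct and follows essentially the same route as the paper's: both arguments reduce to the fact that any three directions in $\bbR^2$ can be carried to any other three by an invertible transformation of the plane, so all $3$-move riders are equivalent and Lemma~\ref{L:top} lets one ignore integrality. The paper phrases this as normalizing every move set to slopes $0,1,\infty$ via a projective transformation; you instead construct a linear map $T$ directly between two given move sets and add the parameter-count explanation of why $r=3$ is exactly the threshold---these are expository variations on the same idea.
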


\begin{proof}
We again rely on Lemma \ref{L:top} to assume pieces can be placed anywhere in the board $\cB$ so long as they do not attack.  Since we need not worry about integral points, a projective transformation does not change the problem.

Every 3-move rider can be transformed by a projective transformation so that its slopes are $0,1,\infty$.  It follows that all 3-move riders are projectively equivalent.  The theorem follows.
\end{proof}

We were led to this theorem by applying Kot\v{e}\v{s}ovec's quasipolynomial formulas to count combinatorial types of $q=4$ of two different 3-move riders: the semiqueen \cite[p.\ 732]{ChMath}, which has the queen's move lines except for one diagonal line, and the trident (his ``bishop + semirook'') \cite[p.\ 730]{ChMath}, which has the queen's move lines without the horizontal.  
We substituted $n = -1$ in both formulas and found the same result:  151 unlabelled combinatorial types.  
\Kot\ has also empirically calculated the counting quasipolynomials for nonattacking configurations of up to $6$ semiqueens and $6$ triangular rooks, pieces that are equivalent to semiqueens on a triangular board.  As Theorem \ref{T3m} predicts, the numbers of unlabelled combinatorial types are the same for both boards even though the complete counting formulas differ.  
See the numbers in Table \ref{Tb:qr} and \Kot's formulas in \cite[Sequences A202654--A202657]{OEIS} for 3 to 6 semiqueens on the square board and \cite[Sequences A193981--A193984]{OEIS} for 3 to 6 triangular rooks.
This is the only comparison we were able to make for $q>3$ and $r\geq3$ of two pieces having the same number of basic moves.

\section{Do fours fail?}\mylabel{sec:fours}

With four or more pieces and basic moves, is there still a single formula in terms of $r$, or in other words, is the number of 
combinatorial types of nonattacking configuration independent of the actual basic moves?  We could not exclude the possibility.  
We studied the simplest case: four 4-move riders.  
When we place the third piece in a region, its exact location influences which combinatorial types are possible for the fourth piece. In Figure~\ref{fig:4x4}(a), with two pieces fixed, the location of the third piece in the shaded region affects the combinatorial types that are possible with a fourth piece. If a fourth piece is placed in a shaded region of Figure~\ref{fig:4x4}(b), the corresponding combinatorial type is not possible in Figure~\ref{fig:4x4}(c). For example, if the fourth piece is at the circle in Figure~\ref{fig:4x4}(b), it is west-northwest of the bottom two pieces and south-southwest of the top piece, which is not a possible placement in Figure~\ref{fig:4x4}(c).

We are led by Theorems \ref{T:q3} and \ref{T3m} and by staring at diagrams like those in Figure \ref{fig:4x4}(b, c) to speculate that the actual combinatorial types of nonattacking configuration may be, up to reorientation, independent of the move set and depend only on $q$ and $r$.  Deciding that looks difficult.

\begin{figure}[hb]
\includegraphics[scale=.25]{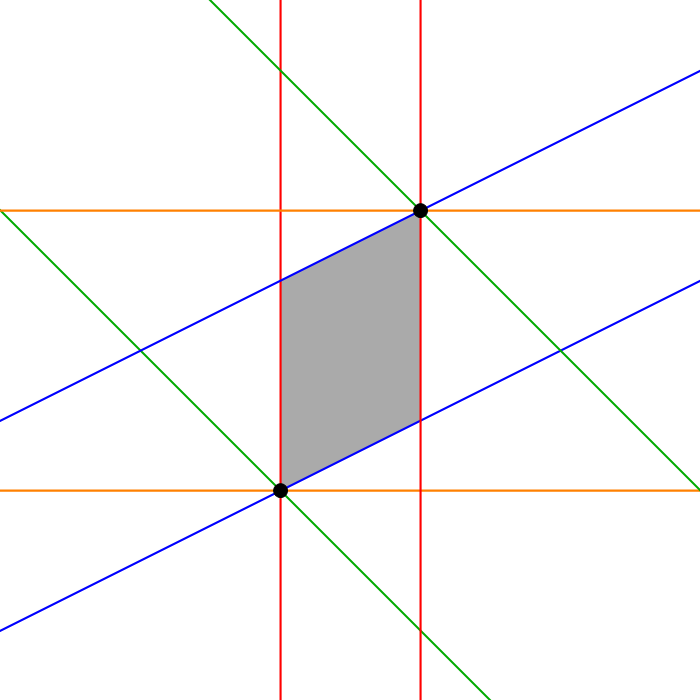}
\\(a)\\[8pt]
\includegraphics[scale=.25]{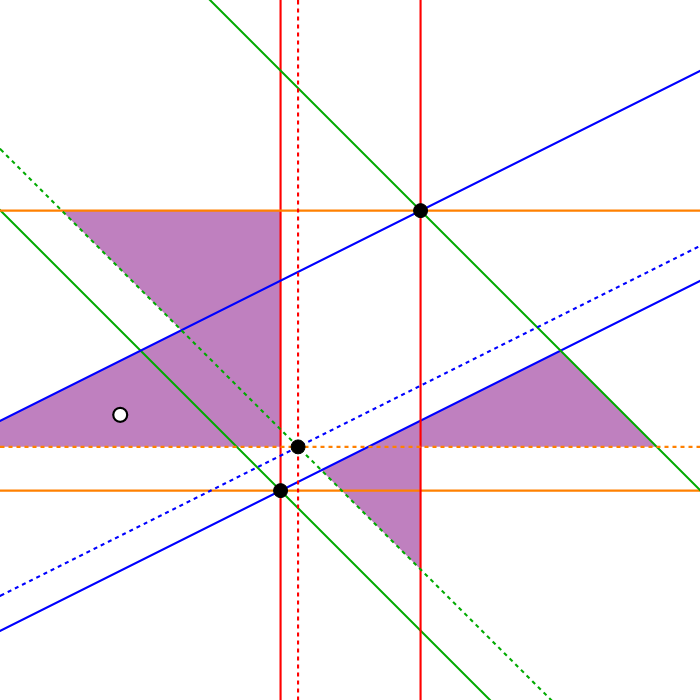} \qquad
\includegraphics[scale=.25]{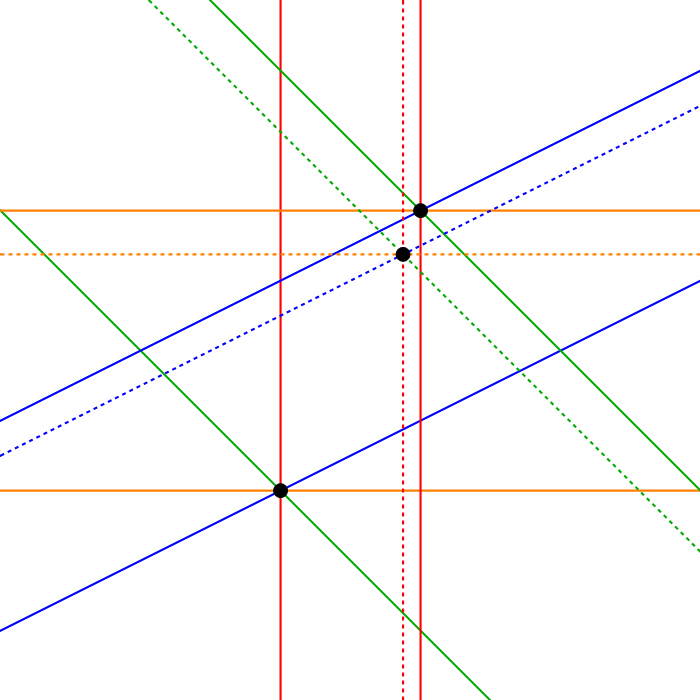}
\\(b) \hspace{2.45in} (c)
\caption{(a)  A nonattacking configuration of two pieces with four move lines.  (b, c) Two ways to place $\pP_3$ that give different possible combinatorial types of location for $\pP_4$.}
\label{fig:4x4}
\end{figure}


\end{document}